\newtheorem{thm}{Theorem}
\newtheorem{cor}[thm]{Corollary}
\newtheorem{lem}[thm]{Lemma}
\newtheorem{prop}[thm]{Proposition}
\theoremstyle{definition}
\theoremstyle{remark}
\newcommand{\G}{\mathcal{G}}
\newcommand{\Dickson}{\mathfrak{D}}
\newcommand{\A}{\mathcal{A}}
\newcommand{\B}{\mathcal{B}}
\newcommand{\Hom}{\textrm{Hom}}
\newcommand{\ann}{\textrm{ann}}
\newcommand{\To}{\rightarrow}
\newcommand{\soc}{\textrm{soc}}
\author{Can Hatipo\u{g}lu\\ \texttt{chatipoglu@alunos.fc.up.pt}}
\title{Stable torsion theories and the injective hulls of simple modules}
\begin{document}
\maketitle
\begin{abstract}
A torsion theoretical characterization of left Noetherian rings $R$ over which injective hulls of simple left modules are locally Artinian is given. Sufficient conditions for a left Noetherian ring to satisfy this finiteness condition are obtained in terms of torsion theories.
\end{abstract}
\section*{Introduction}

A famous open problem in ring theory is Jacobson's conjecture which asks, for a two sided Noetherian ring $R$ with Jacobson radical $J$, whether it is true that $\bigcap_{i=1}^{\infty} J^{i} = 0$. Jategaonkar \cite{jategaonkar} showed in 1974 that fully bounded Noetherian (FBN for short) rings satisfy Jacobson's conjecture. A key step in his proof is that over a FBN ring $R$, finitely generated essential extensions of simple left modules are Artinian. This is equivalent to the property
\begin{center}
 $(\diamond)$ injective hulls of simple left $R$-modules are locally Artinian.
\end{center}
The natural question whether this holds for arbitrary Noetherian rings was  shown not to be true by Musson (see \cite{musson} or \cite{musson82}).

It should be noted that if $R$ is a Noetherian ring which satisfies $(\diamond)$, then $R$ satisfies the Jacobson's conjecture. This makes the property $(\diamond)$ interesting on its own and some Noetherian rings have been tested whether they satisfy this property or not. Recently, interest in this property has increased when Carvalho, Lomp, and Pusat-Y\i lmaz \cite{carvalho-lomp-pusat} considered this property for Noetherian down-up algebras and started the characterization of these algebras with property $(\diamond)$  by giving a partial answer. This characterization has been then completed in the following works of Carvalho and Musson, and Musson \cite{carvalho-musson}, \cite{musson2012}. These results have been followed by a characterization of finite dimensional solvable Lie superalgebras $\mathfrak{g}$ over an algebraically closed field of characteristic zero whose enveloping algebra $U(\mathfrak{g})$ satisfies $(\diamond)$ \cite{hatipoglu-lomp}. Most recently, a complete characterization of Ore extensions $K[x][y;\alpha, d]$ of $K[x]$ with property $(\diamond)$ has been obtained in \cite{carvalho-hatipoglu-lomp}.

There is a way in which torsion theories and property $(\diamond)$ can be linked. We prove in Proposition~\ref{connection} below that for a left Noetherian ring $R$, injective hulls of simple left $R$-modules are locally Artinian if and only if Dickson's torsion theory is stable. This connection makes it possible to carry the study of Noetherian rings satisfying property $(\diamond)$ to the area of stable torsion theories. Using the results from stable torsion theories, we are able to obtain more examples of rings which satisfy property $(\diamond)$.

The organization of the paper is as follows. In the first section we briefly recall the general notions of torsion theories. We then consider in the next section stable torsion theories and obtain certain conditions on rings which lead to property $(\diamond)$. The next part of the paper is devoted to some classes of rings that satisfy these certain conditions. 

\section{Generalities on torsion theories}
Let $R$ be an arbitrary associative ring with unity and let $R$-mod denote the category of left $R$-modules. Dickson had the idea of carrying the notion of torsion in abelian groups to abelian categories, and he defined in \cite{dickson} a \textit{torsion theory} $\tau$ on $R$-mod to be a pair $\tau = (T_{\tau},F_{\tau})$ of classes of left $R$-modules, satisfying the following properties:
\begin{enumerate}
	\item[(i)] $T_{\tau} \cap F_{\tau} = \{0\}$,
	\item[(ii)] $T_{\tau}$ is closed under homomorphic images,
	\item[(iii)] $F_{\tau}$ is closed under submodules,
	\item[(iv)] For each $M$ in $R$-mod, there exist $F \in F_{\tau}$ and $T \in T_{\tau}$ such that $M/T \cong F$.
\end{enumerate}
We call $T_{\tau}$ the class of $\tau$-\textit{torsion} modules and $F_{\tau}$ the class of $\tau$-\textit{torsionfree} modules.

Let $\mathcal{A}$ and $\mathcal{B}$ be nonempty classes of left $R$-modules. If $\A = \{ M \mid \Hom_{R}(M,N) = 0 \textrm{ for all } N \in \B\}$ then $\A$ is said to be the \textit{left orthogonal complement} of $\B$. Similarly, if $\B = \{ N \mid \Hom_{R}(M,N) = 0 \textrm{ for all } M \in \A\}$ then $\B$ is said to be the \textit{right orthogonal complement} of $\A$. We say that the pair $(\A, \B)$ is a \textit{complementary pair} whenever $\A$ is the left orthogonal complement of $\B$ and $\B$ is the right orthogonal complement of $\A$. In particular, if $\tau$ is a torsion theory then $(T_{\tau}, F_{\tau})$ is a complementary pair and every such pair defines a torsion theory.

An immediate consequence of the definition is that the class of torsion modules for a torsion theory $\tau$ is closed under extension. For, if
$$0 \To A \To M \To B \To 0$$
is a short exact sequence of left $R$-modules with $A, B$ are $\tau$-torsion, then for any $\tau$-torsionfree left $R$-module $F$, the corresponding short exact sequence
$$0 = \Hom(B, F) \To \Hom(M,F) \To \Hom(A,F) = 0$$
implies that $M$ is also $\tau$-torsion. Similarly, the class of torsionfree modules is closed under extensions too. While it is not required in the definition of a torsion theory, we will be working with torsion theories such that the class of torsion modules is closed under submodules. Such torsion theories are called \textit{hereditary}.

A nonempty set $\mathcal{L}$ of left ideals of a ring $R$ which satisfies the following conditions is called a \textit{Gabriel filter}:
\begin{itemize}
	\item[(i)] $I \in \mathcal{L}$ and $a \in R$ implies $\ann_{R}(a + I)$ belongs to $\mathcal{L}$.
	\item[(ii)] If $I$ is a left ideal and $J \in \mathcal{L}$ is such that $\ann_{R}(a + I) \in \mathcal{L}$ for all $a \in J$, then $I \in \mathcal{L}$. 
\end{itemize}
Hereditary torsion theories in $R$-mod and Gabriel filters in $R$ are in one-to-one correspondence \cite[Theorem VI.5.1]{stenstrom}.
\subsection{Goldie's torsion theory}

Let $M$ be a left $R$-module. An element $m \in M$ is called a \textit{singular element} of $M$ if $\ann_{R}(m) = \{r \in R \mid rm = 0\}$ is an essential left ideal of $R$. The collection $Z(M)$ of all singular elements of $M$ is a submodule of $M$ called the \textit{singular submodule} of $M$. A module $M$ is called \textit{singular} if $Z(M) = M$ and it is called \textit{nonsingular} if $Z(M) = 0$. The class $F_{\G}$ of all nonsingular left $R$-modules forms a torsionfree class for a hereditary torsion theory on mod-$R$. We call this \textit{Goldie's torsion theory} and denote it by $\G$ \cite{goldie, teply69}.

For any right $R$-module $M$, its Goldie torsion submodule is $t_{\G}(M) = \{m \in M \mid m + Z(M) \in Z(M/Z(M))\}$. The Gabriel filter corresponding to Goldie's torsion theory is the set of all essential left ideals $L$ of $R$ such that there exists an essential left ideal $L'$ of $R$ such that for every $x \in L'$, $\ann_{R}(x + L) = \{r \in R \mid rx \in L\}$ is essential in $R$. Hence, Goldie's torsion class $T_{\G}$ is precisely the class of modules with essential singular submodule, and corresponding torsionfree class $F_{\G}$ is the class of nonsingular modules.

\subsection{Generation \& cogeneration of torsion theories, Dickson's torsion theory}

Let $\mathcal{C}$ be a class of left $R$-modules. If $\mathcal{F}$ is a right orthogonal complement of $\mathcal{C}$ and $\mathcal{T}$ is a left orthogonal complement of $\mathcal{F}$, then the pair $(\mathcal{T}, \mathcal{F})$ is a torsion theory in $R$-mod, called the \textit{torsion theory generated by} $\mathcal{C}$. If $\mathcal{T}$ is the left orthogonal complement of $\mathcal{C}$ and $\mathcal{F}$ is the right orthogonal complement of $\mathcal{T}$, then the pair $(\mathcal{T}, \mathcal{F})$ is a torsion theory, called the \textit{torsion theory cogenerated by} $\mathcal{C}$.

Let $\mathcal{S}$ be a representative class of nonisomorphic simple left $R$-modules. Then the torsion theory $\Dickson$ generated by $\mathcal{S}$ is called \textit{Dickson's torsion theory}. The class of $\Dickson$-torsionfree left $R$-modules are the right orthogonal complements of simple left $R$-modules while the class of $\Dickson$-torsion left $R$-modules are the left orthogonal complements of the class of $\Dickson$-torsion modules. In particular, every simple left $R$-module is $\Dickson$-torsion. Hence the class of all $\Dickson$-torsionfree modules consists of all soclefree left $R$-modules. Moreover, if $M \in T_{\Dickson}$ then $M$ is an essential extension of its socle.

A left $R$-module $M$ is called \textit{semi-Artinian} if for every submodule $N \neq M$, $M/N$ has nonzero socle. We first show that the class of $\Dickson$-torsion left $R$-modules is exactly the class of semi-Artinian left $R$-modules.
\begin{lem}\label{d-torsions_are_semi-artinian}
A left $R$-module $M$ is $\Dickson$-torsion if and only if it is semi-Artinian.
\end{lem}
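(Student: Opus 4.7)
The plan is to exploit directly the identification, already made in the paragraph preceding the lemma, between the $\Dickson$-torsionfree class and the class of soclefree modules: $N \in F_{\Dickson}$ iff $\Hom_R(S,N)=0$ for every simple $S$ iff $\soc(N)=0$. Given this, each direction reduces to a one-line contradiction argument using the definition of semi-Artinian together with the fact that $T_{\Dickson}$ is the left orthogonal complement of $F_{\Dickson}$.

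First I would prove that semi-Artinian implies $\Dickson$-torsion. Take $M$ semi-Artinian and let $f\colon M\to F$ be a homomorphism into a $\Dickson$-torsionfree module $F$. Set $K=\ker f$, so that $M/K$ embeds in $F$. If $K\neq M$, the semi-Artinian hypothesis forces $\soc(M/K)\neq 0$, which transports to a nonzero simple submodule of $F$, contradicting $\soc(F)=0$. Hence $K=M$ and $f=0$, so $\Hom_R(M,F)=0$ for every $F\in F_{\Dickson}$, i.e.\ $M\in T_{\Dickson}$.

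Conversely, I would show that a $\Dickson$-torsion module $M$ is semi-Artinian by a direct contrapositive. If $M$ is not semi-Artinian, there exists a proper submodule $N\subsetneq M$ with $\soc(M/N)=0$, so $M/N\in F_{\Dickson}$. The canonical projection $M\twoheadrightarrow M/N$ is then a nonzero morphism from a $\Dickson$-torsion module to a $\Dickson$-torsionfree one, which is impossible. Thus every proper quotient of $M$ has nonzero socle, and $M$ is semi-Artinian.

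I do not expect any genuine obstacle here: the whole argument is the formal interplay between the orthogonality defining $T_{\Dickson}$ and the explicit description of $F_{\Dickson}$ as the soclefree modules, with the definition of semi-Artinian providing precisely the proper-quotient-with-nonzero-socle witness needed to match the two conditions. The only point to keep in mind while writing is to phrase the reverse direction through a proper quotient (rather than a submodule), since the definition of semi-Artinian is stated in terms of quotients and this is exactly what aligns with the orthogonality test.
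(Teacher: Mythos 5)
Your proposal is correct and follows essentially the same route as the paper: both directions rest on the identification of $F_{\Dickson}$ with the soclefree modules, the forward direction passes to $M/\ker f$ embedded in $F$ and uses the semi-Artinian hypothesis to force $\ker f = M$, and the reverse direction tests the canonical projection $M \to M/N$ against the orthogonality defining $T_{\Dickson}$. The only cosmetic difference is that you phrase the reverse direction contrapositively while the paper argues it directly; the content is identical.
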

\begin{proof}
Let $M$ be a semi-Artinian left $R$-module and $F$ be a $\Dickson$-torsionfree left $R$-module. Then $\soc(F) = 0$ by definition. We show that $\Hom(M,F) = 0$. Every nonzero $R$-homomorphism $f \in \Hom(M,F)$ gives rise to an injective $R$-homomorphism $f' : M/\ker f \To F$. Let $S$ be the socle of $M/\ker f$. Then $f'(S) \subseteq \soc(F) = 0$, hence $S=0$. But this implies that $M = \ker f$. Hence $f = 0$.

Now suppose that $M$ is a $\Dickson$-torsion left $R$-module. Since $\Hom_{R}(M,M/N) \neq 0$ for every proper submodule $N$ of $M$, $M/N$ cannot be $\Dickson$-torsionfree. Hence $M/N$ has nonzero socle for every submodule $N$  of $M$ and so $M$ is semi-Artinian.
\end{proof}
Recall that a module has finite length if and only if it is both Noetherian and Artinian. In fact, we can still have finite length if the module is Noetherian and semi-Artinian. 
\begin{lem}\label{finite-length}\cite[Proposition VIII.2.1]{stenstrom}
A left $R$-module $M$ has finite length if and only if it is Noetherian and semi-Artinian.
\end{lem}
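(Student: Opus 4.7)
The plan is to prove the two directions separately, with the forward direction being essentially immediate and the reverse direction relying on a socle-series argument cut short by the Noetherian hypothesis.

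For the forward implication, suppose $M$ has finite length. Then $M$ is Noetherian by definition. To see that $M$ is semi-Artinian, let $N$ be any proper submodule of $M$; then $M/N$ is a nonzero module of finite length, so it contains a simple submodule (take any minimal element in the poset of nonzero submodules, which exists by the Artinian property coming from finite length). Hence $\soc(M/N) \neq 0$.

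For the reverse implication, assume $M$ is Noetherian and semi-Artinian. Define the (lower) socle series by $\soc_0(M) = 0$ and $\soc_{n+1}(M)$ as the preimage in $M$ of $\soc(M/\soc_n(M))$. This produces an ascending chain
\[
0 = \soc_0(M) \subseteq \soc_1(M) \subseteq \soc_2(M) \subseteq \cdots \subseteq M.
\]
Since $M$ is Noetherian, this chain stabilizes after finitely many steps at some $\soc_n(M)$. If $\soc_n(M)$ were a proper submodule of $M$, then by the semi-Artinian hypothesis $M/\soc_n(M)$ would have nonzero socle, forcing $\soc_{n+1}(M) \supsetneq \soc_n(M)$, a contradiction. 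Hence $M = \soc_n(M)$.

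It remains to show each factor $\soc_{i+1}(M)/\soc_i(M)$ has finite length, for then $M$ will have finite length as a finite extension of finite length modules. By construction, $\soc_{i+1}(M)/\soc_i(M) = \soc(M/\soc_i(M))$ is semisimple; it is also a submodule of the Noetherian module $M/\soc_i(M)$, hence finitely generated semisimple, hence of finite length. The main (very mild) obstacle is merely checking that the socle series terminates finitely, which is exactly where the Noetherian hypothesis is used; the rest of the argument is bookkeeping.
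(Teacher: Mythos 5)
Your proof is correct. The paper gives no proof of its own here --- it simply cites \cite[Proposition VIII.2.1]{stenstrom} --- and your socle-series (Loewy series) argument, terminating by the Noetherian hypothesis and reaching $M$ by semi-Artinianness, with each layer finitely generated semisimple, is exactly the standard proof of that cited result.
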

%
\section{Stable torsion theories}

A hereditary torsion theory $\tau$ on $R$-mod is called \textit{stable} if its torsion class is closed under injective hulls. One of the equivalent conditions for $\Dickson$ to be stable is that modules with essential socle are $\Dickson$-torsion \cite[4.13]{dickson}.

Dickson characterized those rings for which Dickson's torsion theory in the category $R$-Mod is stable. Indeed he considered property $(\diamond)$ for any abelian category with injective envelopes. Translating his results to the language of the present paper, for a left Noetherian ring we obtain a connection between the stability of Dickson's torsion theory and property $(\diamond)$ in the following, which is the main result of the paper:
\begin{prop}\label{connection}
The following are equivalent for a left Noetherian ring $R$.
\begin{itemize}
	\item[(i)] $R$ satisfies property $(\diamond)$;
	\item[(ii)] Dickson's torsion theory is stable;
	\item[(iii)] Any $\Dickson$-torsion $R$-module can be embedded in a $\Dickson$-torsion injective $R$-module;
	\item[(iv)] Any injective $R$-module $A$ decomposes as $A = A_{t} \oplus F$, where $A_{t}$ is the $\Dickson$-torsion part of $A$ and $F$ is unique up to isomorphism and has no socle;
	\item[(v)] If $A$ is an essential extension of its socle, then its $\Dickson$-torsion;
	\item[(vi)] For any left $R$-module $A$, its torsion part $A_{t}$ is the unique maximal essential extension in $A$ of its socle $soc(A)$.
\end{itemize}
\end{prop}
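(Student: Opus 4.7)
The plan is to prove the web of implications (ii) $\Leftrightarrow$ (iii) $\Leftrightarrow$ (iv), (ii) $\Leftrightarrow$ (v) $\Leftrightarrow$ (vi), and (i) $\Leftrightarrow$ (v). Only the last bridge genuinely uses the Noetherian hypothesis via Lemma~\ref{finite-length}; the remaining equivalences are abstract facts about the hereditary torsion theory $\Dickson$ over any ring and essentially come from Dickson's original paper.

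I would handle the easier links first. For (ii) $\Leftrightarrow$ (iii), the forward direction is immediate since $M \hookrightarrow E(M)$; conversely, if $M$ is $\Dickson$-torsion and $M \hookrightarrow I$ with $I$ injective and $\Dickson$-torsion, then by injectivity of $I$ the inclusion extends to $E(M) \to I$, which must be injective since $M$ is essential in $E(M)$, so $E(M)$ embeds in $I$ and is $\Dickson$-torsion by heredity. For (ii) $\Leftrightarrow$ (iv), given an injective $A$ and assuming (ii), $E(A_t) \subseteq A$ is $\Dickson$-torsion and hence equals $A_t$ by maximality, so $A_t$ is injective and splits off with complement $F \cong A/A_t$ torsionfree and hence soclefree; conversely, apply (iv) to $E(M)$ for a $\Dickson$-torsion $M$, where the essential inclusion $M \subseteq E(M)_t$ forces the complementary summand to vanish. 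For (ii) $\Leftrightarrow$ (v), use that every $\Dickson$-torsion module is an essential extension of its socle (noted in the excerpt), together with the identity $E(A) = E(\soc(A))$ whenever $A$ is essential over its socle. For (v) $\Leftrightarrow$ (vi), note that $\soc(A) \subseteq A_t$ always (simples are $\Dickson$-torsion), so $A_t$ is essential over $\soc(A_t) = \soc(A)$ by Lemma~\ref{d-torsions_are_semi-artinian}; any essential extension $B$ of $\soc(A)$ inside $A$ satisfies $\soc(B) = \soc(A)$, is itself essential over its socle, hence by (v) is $\Dickson$-torsion and so $B \subseteq A_t$, showing $A_t$ is the unique maximal such $B$. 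The converse (vi) $\Rightarrow$ (v) is immediate upon applying (vi) to an $A$ that is itself essential over $\soc(A)$.

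The core of the argument is the Noetherian bridge (i) $\Leftrightarrow$ (v). For (v) $\Rightarrow$ (i), take a simple $S$ and a finitely generated submodule $M \subseteq E(S)$; since $\soc(E(S)) = S$ is essential in $E(S)$, the intersection $S \cap M \subseteq \soc(M)$ is essential in $M$, so $M$ is essential over its socle. By (v), $M$ is $\Dickson$-torsion, hence semi-Artinian by Lemma~\ref{d-torsions_are_semi-artinian}, and since $R$ is left Noetherian $M$ is also Noetherian, so Lemma~\ref{finite-length} gives that $M$ has finite length and in particular is Artinian. Conversely for (i) $\Rightarrow$ (v), let $A$ be essential over $\soc(A) = \bigoplus_i S_i$, so that $A \subseteq E(\soc(A)) = \bigoplus_i E(S_i)$. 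Each summand $E(S_i)$ is locally Artinian by $(\diamond)$; locally Artinian modules are semi-Artinian (any nonzero cyclic image is Artinian and hence has nonzero socle), and semi-Artinian modules are closed under direct sums and submodules, so $A$ is semi-Artinian and hence $\Dickson$-torsion. The main points requiring care are the socle bookkeeping in (v) $\Leftrightarrow$ (vi), and the observation in (i) $\Rightarrow$ (v) that local Artinianness implies semi-Artinianness via cyclic submodules.
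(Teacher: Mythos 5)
Your proof is correct, but it is organized rather differently from the paper's. The paper proves only two implications explicitly --- $(i)\Rightarrow(ii)$ (via the Hom-vanishing criterion: any map from the locally Artinian module $E(M)$ to a soclefree module kills each Artinian cyclic submodule) and $(v)\Rightarrow(i)$ (identical to yours) --- and outsources the equivalence of the remaining conditions to \cite[4.13]{dickson}. You instead prove all of the links $(ii)\Leftrightarrow(iii)\Leftrightarrow(iv)$, $(ii)\Leftrightarrow(v)\Leftrightarrow(vi)$ directly from the torsion-theoretic axioms, and replace the paper's $(i)\Rightarrow(ii)$ with $(i)\Rightarrow(v)$, deducing that $A\subseteq\bigoplus_i E(S_i)$ is semi-Artinian from the facts that locally Artinian modules are semi-Artinian and that the semi-Artinian class is closed under direct sums and submodules. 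Both bridges use the left Noetherian hypothesis in the same two places (direct sums of injectives are injective; finitely generated implies Noetherian), so the mathematical content is the same; what your version buys is self-containedness and the explicit observation that only the $(i)\Leftrightarrow(v)$ bridge needs the Noetherian hypothesis. It also quietly patches a small expository gap: the paper's citation nominally covers only conditions $(ii)$--$(iv)$, leaving $(v)$ and $(vi)$ to an earlier passing remark, whereas you supply the $(ii)\Leftrightarrow(v)\Leftrightarrow(vi)$ arguments (with the necessary socle bookkeeping $\soc(E(M))=\soc(M)$ and $\soc(A_t)=\soc(A)$) in full.
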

\begin{proof}
$(i) \Rightarrow (ii)$ We show that $(\diamond)$ implies the stability of $\Dickson$. Let $M$ be a $\Dickson$-torsion left $R$-module. Then $M$ has an essential socle and so its injective hull $E(M)$ is a direct sum of injective hulls of simple left $R$-modules because $R$ is Noetherian. Then $E(M)$ is locally Artinian by assumption. We show that $E(M)$ is $\Dickson$-torsion. Let $f: E(M) \To F$ be an $R$-module homomorphism, where $F$ is a left $R$-module with zero socle. For any $x \in E(M)$, since $Rx$ is Artinian, the restriction $f: Rx \To F$ is zero. It follows that $f$ is zero and thus $E(M)$ is also $\Dickson$-torsion by definition.

$(v) \Rightarrow (i)$ Let $S$ be a simple left $R$-module and $E(S)$ be its injective hull. Let $0 \neq F \leq E(S)$ be a finitely generated submodule of $E(S)$. Then $S\leq_{e}F\leq_{e}E(S)$ and having an essential simple submodule, $F$ has an essential socle. This means, by assumption, that $F$ is $\Dickson$-torsion, \textit{i.e.} $F$ is semi-Artinian by Lemma~\ref{d-torsions_are_semi-artinian}. Since it is a finitely generated module over a left Noetherian ring, $F$ is Noetherian as well. By Lemma~\ref{finite-length} $F$ is Artinian. Thus $E(S)$ is locally Artinian.

The equivalence of (ii - iv) follows from \cite[4.13]{dickson}.
\end{proof}
%

Hence, over a Noetherian ring, the stability of Dickson's torsion theory is a necessary and sufficient condition for property $(\diamond)$. We will be looking for cases in which Dickson's torsion theory is stable for a left Noetherian ring $R$. There are two such cases which imply the stability of Dickson's torsion theory, but first we should introduce a partial order among the torsion theories defined on $R$-mod.

For a ring $R$ we denote the family of all hereditary torsion theories defined on $R$-mod by $R$-tors. Note that $R$-tors corresponds bijectively to a set, see for example \cite[Proposition 4.6]{golan}. We define a partial order in $R$-tors with the help of the following result:

\begin{prop}\cite[Proposition 2.1]{golan}
For torsion theories $\tau$ and $\sigma$ on $R$-mod the following conditions are equivalent:
\begin{itemize}
\item[(a)] Every $\tau$-torsion left $R$-module is $\sigma$-torsion;
\item[(b)] Every $\sigma$-torsionfree left $R$-module is $\tau$-torsionfree.
\end{itemize}
\end{prop}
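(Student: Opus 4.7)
The plan is to apply the complementary pair characterization of torsion theories recalled earlier in the paper: for any torsion theory $\tau = (T_\tau, F_\tau)$, we have $T_\tau = \{M : \Hom_R(M, F) = 0 \text{ for all } F \in F_\tau\}$ and $F_\tau = \{N : \Hom_R(T, N) = 0 \text{ for all } T \in T_\tau\}$, and likewise for $\sigma$. Both implications then fall out of a single Hom-vanishing step, provided one is careful to invoke each complementary relationship in the correct direction.

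For $(a) \Rightarrow (b)$, I would fix an arbitrary $\sigma$-torsionfree module $F$ and argue that $F \in F_\tau$. For any $T \in T_\tau$, the assumption $T_\tau \subseteq T_\sigma$ gives $T \in T_\sigma$, whence $\Hom_R(T, F) = 0$ by the right orthogonal characterization applied to $\sigma$. Since $T$ was arbitrary in $T_\tau$, the right orthogonal characterization applied to $\tau$ now places $F$ in $F_\tau$.

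For $(b) \Rightarrow (a)$ I would mirror the argument on the other side. Fix $T \in T_\tau$ and any $F \in F_\sigma$; the hypothesis $F_\sigma \subseteq F_\tau$ gives $F \in F_\tau$, so $\Hom_R(T, F) = 0$. As $F$ ranged over all $\sigma$-torsionfree modules, the left orthogonal characterization of $\sigma$ yields $T \in T_\sigma$.

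There is no real obstacle in this proposition: it is purely a matter of unwinding the complementary pair definition, and no torsion-theoretic axiom beyond properties (i)--(iv) recalled at the start of Section~1 is needed. The only subtlety is bookkeeping, namely keeping track of which complementary relationship (that of $\tau$ or that of $\sigma$) is being invoked at each step so that the inclusions $T_\tau \subseteq T_\sigma$ and $F_\sigma \subseteq F_\tau$ are not confused with their reverses.
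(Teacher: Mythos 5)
Your argument is correct and complete: both implications follow exactly as you say from the fact, recorded in Section~1, that $(T_\tau,F_\tau)$ and $(T_\sigma,F_\sigma)$ are complementary pairs, so each class is the full orthogonal complement of the other. The paper itself gives no proof, citing Golan instead, but your orthogonality unwinding is the standard argument for this equivalence and there is nothing to add or correct.
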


In case $\tau$ and $\sigma$ are torsion theories on $R$-mod which satisfy the equivalent conditions of the above proposition, we say that $\tau$ is a \textit{specialization} of $\sigma$ and that $\sigma$ is a \textit{generalization} of $\tau$. We denote this situation by $\tau \leq \sigma$. This defines a partial order in $R$-tors. For example, with respect to this ordering, Goldie's torsion theory is the smallest torsion theory in which every cyclic singular left $R$-module is torsion and Dickson's torsion theory is the smallest torsion theory in which every simple left $R$-module is torsion.

\section{Cyclic singular modules with nonzero socle}
We now give a sufficient condition for a torsion theory to be stable. The following result is present in the proof of Proposition 1 in \cite{teply71} but it is not given explicitly. We record it as a lemma and give its proof for the convenience of the reader.

\begin{lem}\label{generaliztaion_of_goldie_is_stable}
Any generalization of Goldie's torsion theory $\G$ is stable.
\end{lem}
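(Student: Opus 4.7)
The plan is to let $M$ be a $\tau$-torsion left $R$-module and to show that its injective hull $E(M)$ is again $\tau$-torsion. The natural object to focus on is $T = t_{\tau}(E(M))$, the $\tau$-torsion submodule of $E(M)$, which is well-defined because $\tau$ is hereditary. Since $M$ is itself $\tau$-torsion and sits inside $E(M)$, the inclusion $M \subseteq T$ is automatic, and because $M$ is essential in $E(M)$ by the definition of the injective hull, the larger submodule $T$ is also essential in $E(M)$.

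My strategy is then to exclude $E(M)/T \neq 0$ by showing that this quotient would be forced to be simultaneously singular and nonsingular. On the one hand, $E(M)/T$ is $\tau$-torsionfree: quotienting by the $\tau$-torsion submodule always produces a $\tau$-torsionfree module when $\tau$ is hereditary. Since $\tau$ is a generalization of $\G$, every $\tau$-torsionfree module is $\G$-torsionfree, i.e.\ nonsingular. On the other hand, $E(M)/T$ is a quotient by an essential submodule, and a short direct check shows that every such quotient is singular: for $x + T \in E(M)/T$ and any nonzero left ideal $I$ of $R$, either $Ix = 0$ (so $I \subseteq \ann_{R}(x+T)$) or $Ix$ is a nonzero submodule of $E(M)$ and, by essentiality of $T$, meets $T$ nontrivially, yielding a nonzero $r \in I$ with $rx \in T$. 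Hence $\ann_{R}(x+T)$ is essential in $R$ for every $x$, so $E(M)/T$ is singular.

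Combining the two conclusions forces $E(M)/T = 0$, i.e.\ $E(M) = T$ is $\tau$-torsion, which is exactly what stability demands. I do not expect any real obstacle: the only step that requires a line of verification is the standard fact that quotients by essential submodules are singular, and after that the argument is just a clean application of the singular vs.\ nonsingular dichotomy coming from $\G \leq \tau$.
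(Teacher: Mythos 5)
Your proof is correct, but it runs along a different track from the paper's. The paper's argument is shorter: since $M$ is essential in $E(M)$, the quotient $E(M)/M$ is singular, hence Goldie torsion, hence $\tau$-torsion (as $\G \leq \tau$); then $E(M)$ is an extension of the $\tau$-torsion module $M$ by the $\tau$-torsion module $E(M)/M$, and the torsion class is closed under extensions. You instead pass to the torsion submodule $T = t_{\tau}(E(M))$, observe that $E(M)/T$ is $\tau$-torsionfree and therefore nonsingular (using the torsionfree side of the relation $\G \leq \tau$), and play that off against the fact that a quotient by an essential submodule is singular. Both proofs ultimately rest on the same computation (quotients by essential submodules are singular), but the paper exploits extension-closure of the torsion class while you exploit the singular/nonsingular dichotomy on the torsionfree quotient; your version is a clean ``dual'' formulation and is equally valid, at the cost of introducing the radical $t_{\tau}$ and one extra reduction. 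One small quibble: the quotient $E(M)/t_{\tau}(E(M))$ is $\tau$-torsionfree for \emph{any} torsion theory (it is axiom (iv) together with the existence of a largest torsion submodule), not specifically because $\tau$ is hereditary; hereditariness is what you would need for $t_{\tau}$ to commute with passing to submodules, which you never use.
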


\begin{proof}
Suppose that $(T,F)$ is a torsion theory which is a generalization of $\G$. For all $M \in T$, since $M$ is essential in its injective hull $E(M)$, $E(M)/M$ is Goldie torsion. Since $T_{\G} \subseteq T$, $E(M)/M$ also belongs to $T$. Since $T$ is closed under extensions, it follows that $E(M)$ also belongs to $T$ and hence $(T,F)$ is stable.
\end{proof}

In particular, Dickson's torsion theory is stable if it is a generalization of Goldie's torsion theory. This can be summarized as follows:
\begin{cor}
If every cyclic singular left $R$-module has a nonzero socle then Dickson's torsion theory is stable.
\end{cor}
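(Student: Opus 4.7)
The plan is to deduce the corollary from Lemma~\ref{generaliztaion_of_goldie_is_stable} by showing that under the stated hypothesis, $\Dickson$ is a generalization of $\G$, i.e.\ $\G \leq \Dickson$. Using the partial order on $R$-tors introduced via the proposition on specializations, this amounts to showing that every $\G$-torsion left $R$-module is $\Dickson$-torsion, which by Lemma~\ref{d-torsions_are_semi-artinian} is the same as showing that every $\G$-torsion module is semi-Artinian.

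So let $M$ be a $\G$-torsion left $R$-module and let $N$ be a proper submodule of $M$. I need to produce a nonzero simple submodule of $M/N$. Since the torsion class of any torsion theory is closed under homomorphic images, $M/N$ is again $\G$-torsion, so by the description of $T_{\G}$ recalled in the first section, the singular submodule $Z(M/N)$ is essential in $M/N$. In particular $Z(M/N) \neq 0$, so I may pick a nonzero element $\bar{m} \in Z(M/N)$. The cyclic submodule $R\bar{m}$ sits inside $Z(M/N)$, and every element of the singular submodule is a singular element, so $R\bar{m}$ is a cyclic singular module.

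The hypothesis now applies directly: $R\bar{m}$ has nonzero socle, which provides a simple submodule of $M/N$. Hence $\soc(M/N) \neq 0$ for every proper submodule $N \leq M$, so $M$ is semi-Artinian, as required. This establishes $\G \leq \Dickson$, and Lemma~\ref{generaliztaion_of_goldie_is_stable} then yields the stability of $\Dickson$.

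The argument is essentially a matter of unwinding definitions; there is no real obstacle, provided one remembers the two facts that $T_{\G}$ consists of modules with essential singular submodule and that a submodule of a singular module is again singular (so that the chosen cyclic submodule $R\bar{m}$ is genuinely singular, not merely contained in a module with essential singular part).
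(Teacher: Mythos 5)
Your proof is correct and follows essentially the same route as the paper: both reduce the corollary to showing that $\Dickson$ is a generalization of $\G$ and then invoke Lemma~\ref{generaliztaion_of_goldie_is_stable}. The only difference is that the paper deduces $\G \leq \Dickson$ from the fact that $\G$ is the smallest torsion theory in which every cyclic singular module is torsion, whereas you verify the inclusion $T_{\G} \subseteq T_{\Dickson}$ directly by exhibiting a cyclic singular submodule with nonzero socle in every nonzero quotient of a Goldie-torsion module; both arguments are sound.
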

\begin{proof}
By assumption, every cyclic singular left $R$-module has the property that every epimorphic image has nonzero socle. Thus every such module belongs to Dickson's torsion class. Since Goldie's torsion theory is the smallest torsion theory in which every cyclic singular module is torsion, it follows that Dickson's torsion theory is a generalization of Goldie's torsion theory, hence it is stable.
\end{proof}

The rings $R$ such that every cyclic singular left $R$-module has a nonzero socle are called \textit{C-rings} in \cite{renault}. Alternatively, they are characterized as the rings over which neat submodules are closed in $R$-mod, where a submodule $N$ of a left $R$-module $M$ is called \textit{neat} if any simple module $S$ is projective relative to the projection $M \To M/N$ and a submodule $N$ of a module $M$ is called \textit{closed} in $M$ if it is not essential in any submodule of $M$. 

In his 1981 paper \cite{smith}, P.F. Smith considers collections of left ideals to test injectivity. For a nonempty collection $\mathcal{C}$ of left ideals of a ring $R$, we say that a left $R$-module $M$ is $\mathcal{C}$-\textit{injective} if for every left ideal $I$ from $\mathcal{C}$, every $R$-homomorphism $I \rightarrow M$ can be lifted to an $R$-homomorphism $R \rightarrow M$. Combining these results, we get a list of different characterizations, given in the following result:

\begin{prop}\cite[10.10]{lifting_modules}\cite[Lemma 4]{smith}
Let $R$ be a ring and $\mathcal{M}ax$ be the collection of maximal left ideals of $R$. The following conditions are equivalent.
\begin{itemize}
	\item[(a)] $R$ is a left C-ring;
	\item[(b)] Every singular module is semi-Artinian;
	\item[(c)] Every neat left ideal of $R$ is closed;
	\item[(d)] A left ideal of $R$ is closed if and only if it is neat;
	\item[(e)] For every left $R$-module, closed submodules are neat;
	\item[(f)] Every $\mathcal{M}ax$-injective left $R$-module is injective.
\end{itemize}
\end{prop}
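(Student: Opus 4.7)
My plan is to prove the six-way equivalence by first recording two hypothesis-free observations and then routing the nontrivial implications through condition (a). The free observations are: (F1) a quotient of a singular module is singular, so any nonzero cyclic submodule of a singular quotient is itself cyclic singular; and (F2) a closed submodule $N$ of any left $R$-module $M$ is automatically neat. For (F2), a nonzero $f : S \To M/N$ with $S$ simple is injective, and the preimage $\pi^{-1}(f(S))$ strictly contains $N$, so closedness of $N$ forces a nonzero $L \leq \pi^{-1}(f(S))$ with $L \cap N = 0$; the composition $L \hookrightarrow \pi^{-1}(f(S)) \To f(S)$ is then an injection of $L$ into the simple module $f(S)$, hence an isomorphism, which furnishes the required lift.

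The equivalence $(a) \Leftrightarrow (b)$ is a direct consequence of (F1): $(b) \Rightarrow (a)$ is immediate, and for $(a) \Rightarrow (b)$, given a singular $M$ and a proper submodule $N$, any $m \in M \setminus N$ yields a cyclic singular $R(m+N) \leq M/N$ with nonzero socle by (a). For the block $(c) \Leftrightarrow (d) \Leftrightarrow (e)$, (F2) supplies one direction universally, so (d) is the conjunction of (c) with (F2) and (e) is the module-theoretic analogue; the implications $(e) \Rightarrow (d) \Rightarrow (c)$ are trivial restrictions to $M = R$. For $(a) \Rightarrow (c)$ I would take a neat $I$ supposedly essential in a strictly larger $J \leq R$, pick $j \in J \setminus I$, and use (a) on the cyclic singular $R(j+I)$ to find a simple $K/I \leq J/I$; neatness of $I$ lifts the inclusion $K/I \hookrightarrow R/I$ to a simple $S \leq R$ satisfying $K = S + I$ and $S \cap I = 0$, which contradicts the fact that $I$ is essential in $K$. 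The same argument carried out for a general module $M$ and a neat $N \leq M$ in place of $I \leq R$ yields $(a) \Rightarrow (e)$ and closes the block. The converse $(c) \Rightarrow (a)$ observes that for a cyclic singular $R/I$, $I = \ann(1+I)$ is essential in $R$ and hence not closed, so by (c) it is not neat; the resulting nonzero (hence injective) map $S \To R/I$ produces a simple submodule of $R/I$.

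The final link $(a) \Leftrightarrow (f)$ uses the reformulation that $N$ is $\mathcal{M}ax$-injective iff $\Ext^{1}(S, N) = 0$ for every simple $S$, read off from the long exact sequence applied to $0 \To P \To R \To R/P \To 0$ with $P$ maximal. For $(a) \Rightarrow (f)$: if $N$ is $\mathcal{M}ax$-injective and $M \supsetneq N$ is an essential extension, then $M/N$ is nonzero singular and contains a simple $K/N$ by (b); the extension $0 \To N \To K \To K/N \To 0$ splits by $\mathcal{M}ax$-injectivity, producing a nonzero simple complement of $N$ in $K$ that contradicts the essentiality of $N$ in $M$. The main obstacle I anticipate is the converse $(f) \Rightarrow (a)$: the naive attempt of declaring a cyclic singular with zero socle to itself be $\mathcal{M}ax$-injective fails (for instance in $k[x,y]$ the module $k[x,y]/(y)$ is cyclic singular with zero socle but has nontrivial $\Ext^{1}$ against the simple $k[x,y]/(x-a, y)$), so I would follow Smith \cite{smith} and assemble a $\mathcal{M}ax$-injective non-injective module from the failure of the C-ring property in a more delicate way; reproducing that construction cleanly is the crux of the proof.
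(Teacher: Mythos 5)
The paper offers no proof of this proposition at all --- it is quoted with citations to \cite[10.10]{lifting_modules} and \cite[Lemma 4]{smith} --- so your attempt can only be judged on its own merits. The block $(a)\Leftrightarrow(b)\Leftrightarrow(c)\Leftrightarrow(d)$ and the implication $(a)\Rightarrow(f)$ are correct and well argued. However, your own observation (F2) creates a problem you do not confront: since closed submodules are \emph{always} neat, condition (e) \emph{as literally stated} holds for every ring, and hence cannot be equivalent to (a) (the paper's Remark (1) shows $K[x,y]$ is not a C-ring). The statement must intend ``neat submodules are closed in every module'' (or the biconditional), which is what your ``module-theoretic analogue'' argument actually proves; but you should say so explicitly, because for the literal (e) your claim that $(e)\Rightarrow(d)$ is a ``trivial restriction to $M=R$'' is false --- the literal (e) carries no information. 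Flagging the misprint is part of a correct proof here.

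The genuine gap is $(f)\Rightarrow(a)$, which you explicitly leave to Smith; without it the six-way equivalence does not close. The missing construction is shorter than you fear and fits the torsion-theoretic theme of the paper. Let $L$ be an essential left ideal with $L\neq R$, and inside the injective module $E(R)$ define $M\supseteq L$ by letting $M/L$ be the sum of all semi-Artinian submodules of $E(R)/L$ (this sum is again semi-Artinian, the class being closed under direct sums and quotients). Then $M$ is $\mathcal{M}ax$-injective: given $f:P\To M$ with $P$ maximal, extend it to $g:R\To E(R)$ by injectivity; since $g(P)=f(P)\subseteq M$, the module $(g(R)+M)/M$ is an epimorphic image of $R/P$, hence semi-Artinian, so $(g(R)+M)/L$ is an extension of semi-Artinian by semi-Artinian and therefore lands in $M/L$, giving $g(R)\subseteq M$. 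By (f), $M$ is injective; as $L$ is essential in $E(R)$, so is $M$, whence $M=E(R)$ and $E(R)/L$ is semi-Artinian. A $\Dickson$-torsion module is an essential extension of its socle (as recorded in the paper), so the nonzero submodule $R/L$ of $E(R)/L$ meets $\soc\bigl(E(R)/L\bigr)$ nontrivially and thus $\soc(R/L)\neq 0$. Since every cyclic singular module is of the form $R/L$ with $L$ essential, this is exactly (a). With this paragraph added, and (e) corrected, your proof is complete.
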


%
%
Hence, a Noetherian ring satisfying any of the equivalent conditions of the above proposition satisfies property $(\diamond)$.
\section{Torsionfree projective modules}
Teply considers in \cite{teply71} for an arbitrary torsion theory $\tau$ the following property:
\begin{center}
(P) every nonzero $\tau$-torsionfree module contains a nonzero projective submodule.
\end{center}
It turns out that this property is closely related to Goldie's torsion theory in the sense that if a torsion theory $\tau$ satisfies condition (P) then $\tau$ is a generalization of Goldie's torsion theory by \cite[Proposition 1]{teply71}. This in turn means that any torsion theory satisfying (P) is stable by Lemma~\ref{generaliztaion_of_goldie_is_stable}.

In particular, when this is applied to Dickson's torsion theory we find out that Dickson's torsion theory is stable when every nonzero soclefree module contains a nonzero projective submodule. This means that every ring $R$ such that Dickson's torsion theory satisfies condition (P) in $R$-Mod is a C-ring. Teply proves the following result as an equivalent condition to (P): 

\begin{prop}\cite[Proposition 2]{teply71}
Let $\tau$ be a torsion theory in $R$-Mod. $\tau$ satisfies condition (P) if and only if every nonzero torsionfree left ideal contains a nonzero projective left ideal and $\tau$ is a generalization of Goldie's torsion theory.
\end{prop}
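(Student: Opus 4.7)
The plan is to prove both implications directly, with the forward direction nearly immediate and the backward direction requiring a short transfer-of-projectivity argument. The key idea for $(\Leftarrow)$ is to reduce the question about an arbitrary torsionfree module $M$ to a question about a left ideal of $R$, by cutting out a cyclic submodule of $M$ and exploiting the fact that $\tau \geq \G$.

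For $(\Rightarrow)$, assume $\tau$ satisfies (P). That $\tau$ is a generalization of $\G$ is exactly \cite[Proposition 1]{teply71}, the result recalled in the paragraph preceding the statement. To obtain the ideal condition, let $L$ be any nonzero $\tau$-torsionfree left ideal; applying (P) to $L$ regarded as a left $R$-module produces a nonzero projective submodule $P \leq L$, and $P$ is a left ideal of $R$ since $L \leq R$.

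For $(\Leftarrow)$, assume both hypotheses and let $M$ be a nonzero $\tau$-torsionfree module. Choose $0 \neq m \in M$ and set $I = \ann_{R}(m)$, so that $Rm \cong R/I$. Since $Rm \leq M$ it is $\tau$-torsionfree, and because $\tau \geq \G$ it is also $\G$-torsionfree, i.e.\ nonsingular. Nonsingularity of $R/I$ forces $I$ to be non-essential in $R$ (otherwise the class of $1$ would be a nonzero singular element of $R/I$), so there exists a nonzero left ideal $J$ of $R$ with $J \cap I = 0$. The map $J \To Jm$, $j \mapsto jm$, is then an $R$-module isomorphism (injective precisely because $J \cap I = 0$), so $J \cong Jm \leq M$ is itself $\tau$-torsionfree. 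Applying the ideal hypothesis to $J$ yields a nonzero projective left ideal $P \leq J$, and its image $Pm \leq M$ is projective (being isomorphic to $P$ under the same map) and nonzero, as required.

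I do not foresee a real obstacle; the only substantive step is the use of $\tau \geq \G$ to extract the complementary ideal $J$, which is exactly what allows the hypothesis about torsionfree left ideals to be deployed inside an arbitrary torsionfree module $M$ and then pulled back to produce a projective submodule of $M$.
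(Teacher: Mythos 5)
Your argument is correct. Note that the paper itself offers no proof of this proposition --- it is quoted directly from \cite[Proposition 2]{teply71} --- so there is nothing internal to compare against; your reconstruction is essentially Teply's own argument. The forward direction is immediate (the ideal condition is a special case of (P), and $\tau \geq \G$ is \cite[Proposition 1]{teply71}, which the paper also cites as known), and in the backward direction every step checks out: $Rm \cong R/I$ is nonsingular because $F_{\tau} \subseteq F_{\G}$, so $I$ cannot be essential (else $1+I$ would be a nonzero singular element), the complement $J$ with $J \cap I = 0$ maps isomorphically onto $Jm \leq M$, and the projective left ideal $P \leq J$ supplied by the hypothesis transports to a nonzero projective submodule $Pm \cong P$ of $M$.
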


Then, since over a C-ring Dickson's torsion theory is a generalization of Goldie's torsion theory, for a $C$-ring $R$, Dickson's torsion theory satisfies (P) if and only if every soclefree left ideal contains a nonzero projective left ideal. As a concrete example we can consider Rickart rings, where a ring $R$ is called a \textit{left Rickart ring} if and only if every principal left ideal of $R$ is projective as a left $R$-module \cite[\S 7D]{lam}.

\section{Remarks}

We finish with a list of remarks.

(1) While it is true that every Noetherian C-ring has property $(\diamond)$, there are Noetherian rings which satisfy $(\diamond)$ but are not C-rings. For example, the ring of polynomials $R = K[x,y]$ in two indeterminates over a field $K$ is a commutative Noetherian domain and hence satisfies $(\diamond)$. The ideal $I = \langle x \rangle$ is essential in $R$ since $R$ is a domain, but the singular module $M = R/I$ has zero socle and hence $R$ is not a C-ring.

(2) Let $R$ be a Noetherian C-domain. Let $0 \neq P$ be a prime ideal in $R$. Since $R$ is a domain, $P$ is essential in $R$. Hence $R/P$ has finite length. It follows that $R/P$ is simple and $P$ is a maximal ideal, so every Noetherian C-domain satisfies the property that each prime ideal is maximal.

(3) As mentioned in the introduction, a complete characterization of Ore extensions $S = K[x][y;\sigma, d]$ with property $(\diamond)$ has been obtained in \cite{carvalho-hatipoglu-lomp}. According to their result, such an Ore extension has property $(\diamond)$ if and only if $\sigma \neq 0$ has finite order or $\sigma = 1$ and $d$ is locally nilpotent. In this case, $S$ is isomorphic to either the quantum plane or the first Weyl algebra with $q$ is a root of unity. While the first Weyl algebra is a hereditary ring, the quantum plane is not a C-ring since the prime ideal $\langle x \rangle$ is not maximal.

(4) Another class of rings over which the cyclic singular modules have nonzero socle is the so called class of SI-rings. A ring $R$ is said to be a \textit{left SI-ring} if every singular left $R$-module is injective. SI-rings satisfy the stronger property that $R/E$ is semisimple for every essential left ideal $E$ of $R$. Of course each SI-ring is a C-ring and a left SI-ring is hereditary. For a list of equivalent conditions for a ring $R$ to be a left SI-ring, see \cite[17.4]{extending_modules}.

(5) Every left hereditary Noetherian ring is a left $C$-ring \cite[5.4.5]{mcconnell-robson}, but the converse is not true in general, because there are nonhereditary $C$-rings. For example, a commutative Noetherian domain $R$ which is not integrally closed such that every nonzero prime ideal is maximal is a $C$-ring which is not hereditary. To see this, first note that $R$ is not a Dedekind domain because it is not integrally closed. On the other hand, Dedekind domains are alternatively characterized as integral domains which are hereditary, hence $R$ cannot be hereditary. 

(6) A particular case in which the converse holds is when $R$ is an SC-ring, \textit{i.e.}, when singular modules are continuous. In \cite[17.4]{extending_modules} it is proved, among other things, that $R$ is a left SI-ring if and only if $R$ is a left hereditary left SC-ring. Another case in which this holds is provided by Faith in \cite{faith}. A ring $R$ is called a left \textit{QI-ring} if every quasi injective left $R$-module is injective. Then, Faith proves that a left QI left C-ring is hereditary \cite[Theorem 18]{faith}.

\section*{Acknowledgment}

Parts of this paper have appeared in the author's doctoral dissertation at the University of Porto. The author wishes to thank his supervisor Professor Christian Lomp for his inspiration and guidance. This research was supported by \textit{Funda\c{c}\~{a}o para a Ci\^{e}ncia e a Tecnologia - FCT} by the grant SFRH/BD/33696/2009.

\bigskip

\flushleft Can Hatipo\u{g}lu\\
Center of Mathematics\\
University of Porto\\
Rua do Campo Alegre 687\\
4169-007, Porto, Portugal

\end{document}